\DeclareMathOperator{\Aa}{\mathcal{A}}
\DeclareMathOperator{\Ba}{\mathcal{B}}
\DeclareMathOperator{\Ca}{\mathcal{C}}
\DeclareMathOperator{\Da}{\mathcal{D}}
\DeclareMathOperator{\Ea}{\mathcal{E}}
\DeclareMathOperator{\Ib}{\mathbb{I}}
\DeclareMathOperator{\Rb}{\mathbb{R}}
\DeclareMathOperator{\op}{^\text{op}}
\DeclareMathOperator{\id}{\mathsf{id}}
\DeclareMathOperator{\Set}{\textbf{Set}}
\DeclareMathOperator{\Cat}{\textbf{Cat}}
\newcommand{\type}[1]{\mathsf{{#1}}}
\newcommand{\lens}[2]{\begin{pmatrix}{#1} \\ {#2} \end{pmatrix}}
\newcommand{\xto}[1]{\xrightarrow{#1}}
\newcommand{\pto}{\,\cdot\kern-.1em{\to}\,}
\providecommand*{\xmapstofill@}{%
  \arrowfill@{\mapstochar\relbar}\relbar\rightarrow
}
\providecommand*{\xmapsto}[2][]{%
  \ext@arrow 0395\xmapstofill@{#1}{#2}%
}
\def\slashedarrowfill@#1#2#3#4#5{%
  $\m@th\thickmuskip0mu\medmuskip\thickmuskip\thinmuskip\thickmuskip
   \relax#5#1\mkern-7mu%
   \cleaders\hbox{$#5\mkern-2mu#2\mkern-2mu$}\hfill
   \mathclap{#3}\mathclap{#2}%
   \cleaders\hbox{$#5\mkern-2mu#2\mkern-2mu$}\hfill
   \mkern-7mu#4$%
}
\def\rightslashedarrowfill@{%
  \slashedarrowfill@\relbar\relbar\mapstochar\rightarrow}
\newcommand\xslashedrightarrow[2][]{%
  \ext@arrow 0055{\rightslashedarrowfill@}{#1}{#2}}
\newcommand{\topro}{\xslashedrightarrow{}}
\tikzset{
    vert/.style={anchor=south, rotate=90, inner sep=.5mm}
} 
\newtheorem{thm}{Theorem}[section]
\theoremstyle{definition}
\newtheorem{defn}[thm]{Definition}
\newtheorem{rmk}[thm]{Remark}
\newtheorem*{acknowledgements}{Acknowledgements}
\newtheorem{cor}[thm]{Corollary}
\newtheorem{prop}[thm]{Proposition}
\title{Double Categories of Open Dynamical Systems (Extended Abstract)}
\author{David Jaz Myers
\institute{Johns Hopkins University}}
\begin{document}

\maketitle

\begin{abstract}
   A (closed) dynamical system is a notion of how things can be, together with a notion of how they may change given how they are. The idea and mathematics of closed dynamical systems has proven incredibly useful in those sciences that can isolate their object of study from its environment. But many changing situations in the world cannot be meaningfully isolated from their environment -- a cell will die if it is removed from everything beyond its walls. To study systems that interact with their environment, and to design such systems in a modular way, we need a robust theory of open dynamical systems. 

   In this extended abstract, we put forward a general definition of open
   dynamical system. We define two general sorts of morphisms between these systems: covariant morphisms which include trajectories, steady states, and periodic orbits; and contravariant morphisms which allow for
   plugging variables of some systems into parameters of other systems. We
   define an indexed double category of open dynamical systems indexed by their
   interface and use a
   double Grothendieck construction to construct a double category
   of open dynamical systems.

   In our main theorem, we construct covariantly representable indexed double functors
   from the indexed double category of dynamical systems to an indexed double
   category of spans. This shows that all covariantly representable structures
   of dynamical systems --- including trajectories, steady states, and periodic
   orbits --- compose according to the laws of matrix arithmetic.
\end{abstract}

\section{Open Dynamical Systems}

The notion of a \emph{dynamical system} pervades mathematical modeling in the
sciences. A dynamical system consists of a way things might be, and a way things might change given how they are. There are many
doctrines in which this notion may be interpreted:

\begin{itemize}
  \item In a \emph{discrete} dynamical system, we have a set $S$ of states and
    an update function $u : S \to S$ which assigns to the current state $s \in
    S$ of the system the next state $u(s) \in S$.
  \item In a \emph{Markov model}, we have an $n$-element set $S$ of states, and
    an $n \times n$ stochastic matrix $U$ whose $(i, j)$ entry $U_{ij}$ is the
    probability that state $i$ will transition to state $j$. We can also see
    this as a function $u : S \to DS$, where $DS$ is the set of probability
    distributions on $S$ by the relation $u(i)_j := U_{ij}$. 
  \item A \emph{continuous-time} dynamical system is often given by a system of
    differential equations. We have a manifold $S$ of states (often $\Rb^n$, where
    $n$ is the number of state variables), and a vector field $u : S \to TS$
    giving the differential equation
    $$\frac{ds}{dt} = u(s).$$
\end{itemize}

These systems are all \emph{closed} in the sense that they do not depend on
external parameters. However, real-world systems are seldom closed, and our
models of them often depend on external parameters. Furthermore, these
parameters may themselves depend on certain variables which are exposed by other
dynamical systems.

An \emph{open dynamical system} is a system whose dynamics
may depend on external parameters (which we will call \emph{inputs}) and which exposes some variables of its
states (which we will call \emph{outputs}). The above examples of closed dynamical systems have open analogues:
\begin{itemize}
  \item A \emph{deterministic automaton} consists of an input alphabet $I$, an
    output alphabet $O$, a set of states $S$, a readout function $r : S \to O$
    which exposes the output symbol $r(s)$ of the state $s$, and an update
    function $u : S \times I \to S$ which takes the current state $s$ and an
    input symbol $i$ and yields the state $u(s, i)$ that the automaton will
    transition into when reading $i$ in state $s$.
  \item A \emph{Markov decision process} consists of a set $S$ of states, a set
    $O$ of orientations the agent may take in the environment, a set of actions $I$, a readout
    function $r : S \to O$ which extracts the orientation of the agent in a
    given state, and a stochastic
    update function $u : S \times I \to DS$ which, for every action $i$ and
    state $s$ gives a probability distribution $u(s, i)$ on the states of $S$
    representing the likely transitions of the system given that action $i$ is
    taken in state $s$.
    Often one includes an expected reward, so that $u$ instead has
    signature $S \times I \to D(\Rb \times S)$. 
  \item An \emph{open continuous-time} dynamical system (see \cite{schultz2016dynamical}) corresponds to a family
    of differential equations
$$\frac{ds}{dt} = u(s, i)$$
    concerning a variable state $s \in S$ varying with
    a choice of parameter $i \in I$, and exposing a variable $r(s) \in O$.
\end{itemize}

These various sorts of dynamical systems have in common the following general form:
\begin{itemize}
\item They involve a notion
of state space $S$ which takes place in a category that also contains their
output space $O$ so that the readout $r : S \to O$ can be a morphism in this
category. In the examples of deterministic automata and Markov decision
processes, the state and output spaces are sets; for a continuous-time dynamical system, they are
differentiable manifolds. We will refer to these spaces in general as
\emph{contexts}, and so we will begin with a category $\Ca$ of contexts.
\item They involve a notion of bundle over the state space, or of contextualized
  maps between contexts. That is, to every context $C$, there is a category
  $\type{Bun}(C)$ of \emph{actions} possible in the context $C$. We see that not only is the space of possible changes in a given state $TS \in
  \type{Bun}(S)$ a bundle in this sense, but also the inputs $I \in
  \type{Bun}(O)$ are a bundle over the outputs.

  In order for the update function to map from $I$ to $TS$ in $\type{Bun}(S)$,
  we must be able to recontextualize (or pull back) bundles along maps of contexts. Therefore,
  we will ask that $\type{Bun} : \Ca\op \to \Cat$ be an indexed category. We see
  then that we can pull back the inputs $I$ along $r : S \to O$ so that the update
  $u$ may have signature $u : r^{\ast} I \to TS$. 
\item For every context $S$, we must have a canonical bundle $TS \in \type{Bun}(S)$ of
  changes possible in each state. We want $T$ to covary with states,
  so that we may pushforward changes alongs maps between state spaces.
  Therefore, we ask that $T$ be a section of the indexed category $\type{Bun} :
  \Ca\op \to \Cat$
\end{itemize}

We refer to the data of an indexed category $\type{Bun} : \Ca\op \to \Ca$ with a
section $T$ collectively as a \emph{dynamical system doctrine}, or
just \emph{doctrine}.
\begin{defn}
A \emph{dynamical system doctrine} is an indexed category $\type{Bun} : \Ca\op
\to \Cat$ with a section $T$ of its Grothendieck construction.
\end{defn}

Motivated by the examples above, we define a $(\type{Bun}, T)$-dynamical system to consist of:
  \begin{itemize}
  \item A space $S \in \Ca$ of states.
    \item A space $O \in \Ca$ of outputs or \emph{orientations}.
    \item A bundle $I \in \type{Bun}(O)$ giving the inputs or parameters valid
      in a given orientation.
    \item A readout map $r : S \to O$ extracting the orientation of the system
      in a given state.
    \item An update function $r^{\ast} I \to TS$ in $\type{Bun}(S)$ which sends
      each input valid in a given state to the resulting change in the system in
      $\type{Bun}(S)$.
  \end{itemize}

The above open dynamical systems arise for various choices of doctrine. Namely:
\begin{itemize}
  \item Deterministic automata arise by taking $\type{Bun}(C):=
    \textbf{CoKleisli}(C \times -)$, the coKleisli category for the comonad $C
    \times -$, together with the section $C \mapsto C$. We will refer to this as
    the \emph{deterministic doctrine}.
  \item Markov decision process arise by taking $\type{Bun}(C) :=
    \textbf{BiKleisli}(C\times -, D)$, the biKleisli category of the comonad $C
    \times -$
    distributing over the strong monad of probability distributions $D$. Any
    strong monad will work here\footnote{A commutative monad is necessary for
      the monoidal structure.}, for example, the monad $D(\Rb \times -)$ which
    keeps track of an expected $\Rb$-valued reward, or the powerset monad which
    allows for non-determinism. We will refer to this as the \emph{monadic
    doctrine}.
  \item Continous-time dynamical systems arise by taking $\type{Bun}(C) :=
    \type{Subm}(C)$ to be the category of submersions $M \to C$, with section
    $T$ given by taking the tangent bundle. We will refer to this as the
    \emph{continuous doctrine}.
\end{itemize}

\begin{acknowledgements}
The author would like to thank David Spivak, Emily Riehl, and Sophie Libkind for fruitful
conversation and comments during drafting. The author also appreciates support
from the National Science Foundation grant DMS-1652600.
\end{acknowledgements}
  
\section{Contravariant Morphisms: Plugging Variables into Parameters}

We may plug the variables exposed by one dynamical system into the parameters of
other dynamical systems to create a more complex dynamical system. For example,
consider a rabbit population $r$ which reproduces at a rate $\alpha$ and is
eaten by a predator at a rate $\beta$:
$$\frac{dr}{dt} = \alpha  r - \beta r.$$
If we have a population of foxes $f$ which reproduce at a rate $\gamma$ and die
at a rate $\delta$:
$$\frac{df}{dt} = \gamma f - \delta f,$$
we may want to say that the rate $\beta$ at which rabbits are eaten is
proportional to the population of foxes, and the rate at which foxes breed
depends on how many rabbits they eat:
\begin{align*}
  \beta &= cf \\
  \gamma &= dr
\end{align*}
Making this substitution, we get the final system of equations, usually known as
the ``Lotka-Volterra predator-prey model'':
\begin{equation}\label{eqn:lotka.volterra}
  \begin{aligned}
  \frac{dr}{dt} &= \alpha r - cfr \\
  \frac{df}{dt} &= drf - \delta f.
  \end{aligned}
\end{equation}

This sort of ``plugging in'' of the exposed variables of one system into the
parameters of another is governed by \emph{lens composition} in general. In
special cases, lens composition can be described by an algebra of wiring
diagrams \cite{schultz2016dynamical}.

In the general setting of an indexed category $\type{Bun} : \Ca\op \to \Cat$, we use the notion of a \emph{generalized lens} due to Spivak
\cite{spivak2019generalized} to govern this sort of ``plugging in'' operation.
\begin{defn}\label{defn:generalized.lens}
Given an indexed category $\type{Bun} : \Ca\op \to \Cat$, the category of
$\type{Bun}$-lenses is the contravariant Grothendieck construction of the
pointwise opposite of $\type{Bun}$.
$$\textbf{Lens}_{\type{Bun}} := \int_{C : \Ca} \type{Bun}(C)\op. $$

We denote an object of the category of lenses by $\lens{A}{C}$ where $C \in \Ca$
and $A \in \type{Bun}(C)$, and we write a morphism in the category of lenses
(itself called a lens) as
$$\lens{f^{\sharp}}{f} : \lens{A}{C} \leftrightarrows \lens{A'}{C'}$$
where $f : C \to C'$ and $f^{\sharp} : f^{\ast}A' \to A$.
\end{defn}

We can see open dynamical systems as particular sorts of generalized lenses,
and they may therefore be acted upon by generalized lenses via composition.
The data of a $(\type{Bun}, T)$-dynamical system can be described as a
$\type{Bun}$-lens
$$\lens{u}{r} : \lens{TS}{S} \leftrightarrows \lens{I}{O}.$$
Therefore, given any lens $\lens{f^{\sharp}}{f} : \lens{I}{O} \leftrightarrows
\lens{I'}{O'}$, we may compose to get a new dynamical system
$$\lens{f^{\sharp}}{f} \circ \lens{u}{r} : \lens{TS}{S} \leftrightarrows \lens{I'}{O'}.$$
In particular, we can formalize the Lotka-Volterra system as follows. The rabbit system
may be described as having
\begin{itemize}
\item state space $\Rb$,
\item output space $\Rb$, with readout $\id : \Rb \to \Rb$,
\item input bundle $\Rb \times \Rb^2 \to \Rb$, and
\item update $u : \Rb \times \Rb^2 \to T\Rb$ given by
  $$u(r, (\alpha, \beta)) := (\alpha r - \beta r) \frac{d}{dr}$$.
\end{itemize}
The fox system is defined in the same way. We may then form the product system
by taking the cartesian product of all the data involved.\footnote{
Though for reasons of space we will not dwell on the issue in this extended abstract, all of
our constructions can be carried out on a \emph{monoidal} indexed category with
a pseudo-monoidal section,
yielding a monoidal structure on the double category of open dynamical systems.
This monoidal structure is important: it is the way we combine systems
disjointly in
preparation to plug in the variables of some into the parameters of others. Our
covariantly representable indexed double functors of Section
\ref{sec:representable.functors} are also lax monoidal with repespect to this
monoidal structure.
} This gives us, in
total, a generalized lens
$$\lens{T(\Rb \times \Rb)}{\Rb \times \Rb} \leftrightarrows \lens{(\Rb \times
  \Rb) \times (\Rb^2 \times \Rb^2)}{\Rb \times \Rb}$$
We now consider the lens $\lens{e^{\sharp}}{e} : \lens{(\Rb \times
  \Rb) \times (\Rb^2 \times \Rb^2)}{\Rb \times \Rb} \leftrightarrows \lens{(\Rb
  \times \Rb) \times (\Rb^2 \times \Rb^2)}{\Rb \times \Rb}$ given by
\begin{align*}
  e(r, f) &= (r, f) \\
  e^{\sharp}((r, f), ((\alpha, c), (d, \delta))) &= ((r, f), ((\alpha, cf), (dr, \delta)))
\end{align*}
The composite
$$\lens{T(\Rb \times \Rb)}{\Rb \times \Rb} \leftrightarrows \lens{(\Rb \times
  \Rb) \times (\Rb^2 \times \Rb^2)}{\Rb \times \Rb} \leftrightarrows \lens{(\Rb \times
  \Rb) \times (\Rb^2 \times \Rb^2)}{\Rb \times \Rb}$$
is then the combined system of Eqn \ref{eqn:lotka.volterra}. This general way of
combining open continuous-time dynamical systems was explored in
\cite{schultz2016dynamical}. These contravariant morphisms will form the
vertical morphisms in our double categories of dynamical systems.

We will see this system of ``plugging in'' equations via lens composition as a
\emph{contravariant morphism} between open dynamical systems. Other examples of
contravariant morphisms of open dynamical systems include the cascade products
of automata important for Kohn-Rhodes theory \cite{Krohn1965}\footnote{The author would like to thank Sophie Libkind for pointing out the
relationship between contravariant morphisms and cascade products}, and hierarchical
planning schemes for Markov decision processes \cite{spivakmarkov}. 

\section{Covariant Morphisms: Trajectories, Steady States, and Periodic Orbits}\label{sec:covariant.morphisms}

In addition to contravariant morphisms of open dynamical systems, there are also the
\emph{covariant morphisms} by which one system is directly mapped onto another.
These include trajectories, steady states, and periodic orbits. For example, consider the
continuous-time dynamical system
$$\lens{\frac{d}{ds}}{\id} : \lens{T\Rb}{\Rb} \leftrightarrows \lens{\Rb}{\Rb}.$$
This system represents the very simple differential equation
$$\frac{ds}{dt} = 1.$$
Despite its simplicity, it is of crucial importance for the study of continuous-time differential
equations because of what it represents: the notion of trajectory. It is the ``walking trajectory''. Namely, let $\lens{u}{\id} :
\lens{TS}{S} \leftrightarrows \lens{S \times I}{S}$ be
the Lotka-Volterra model, and consider a smooth map $(r, f) : \Rb \to
S$ together with a bundle morphism
\[
\begin{tikzcd}[column sep = large]
\Rb \arrow[d] \arrow[r, "{((\alpha, c), (d, \delta))}"] & S \times I \arrow[d] \\
\Rb \arrow[r, "{(r, f)}"']                              & S                   
\end{tikzcd}
\]
subject to the relation that
\begin{align*}
u((r, f), ((\alpha, c), (d, \delta))) = \frac{dr}{dt}\frac{d}{dr} + \frac{df}{dt} \frac{d}{df}.
\end{align*}
Or, in other words, functions $r, f, \alpha, c, d, \delta : \Rb \to \Rb$ so that
Eqn \ref{eqn:lotka.volterra} is satisfied for all $t \in \Rb$:
\begin{align*}
  \frac{dr}{dt}(t) &= \alpha(t) r(t) - c(t)f(t)r(t) \\
  \frac{df}{dt}(t) &= d(t)r(t)f(t) - \delta(t) f(t).
\end{align*}
This is a solution to the system of equations, given a choice
of parameter for all times $t$.

Another example of a covariant morphism is a steady state. We recall from
\cite{spivak2015steady} the definition of steady state for a deterministic
automaton:
\begin{defn}[Definition 2.4 of \cite{spivak2015steady}]
Let $\lens{u}{r} : \lens{S}{S} \leftrightarrows \lens{I}{O}$ be a deterministic
automaton. For $o \in O$ and $i \in I$, an $(i, o)$-steady state is a state $s
\in S$ such that $r(s) = o$ and $u(s, i) = s$.
\end{defn}
Consider the trivial deterministic automaton $\lens{\id}{\id} :
\lens{\ast}{\ast} \leftrightarrows \lens{\ast}{\ast}$. Note that we may see a
state $s \in S$ of a deterministic automaton $\lens{u}{r}$ as a map from the
states of $\lens{\id}{\id}$ to the states of $\lens{u}{r}$, and similarly an output $o \in O$
and an input $i \in I$ as maps from the outputs and inputs of $\lens{\id}{\id}$
respectively. If we require these to satisfy the laws:
\begin{align*}
  r(s) &= o \\
  u(s, i) &= s
\end{align*}
then we will have a steady state of $\lens{u}{r}$.

These sorts of maps are instances of covariant
morphisms of dynamical systems. The general relations that such morphisms must
satisfy involve commutation between lenses and bundle maps. We will therefore
work with a \emph{double category} whose vertical morphisms are lenses and whose
horizontal morphisms are bundle maps. The construction of this double category
is quite general; we refer to it as the \emph{Grothendieck double
  construction}.

\section{The Double Category of Interfaces}\label{sec:groth.double.construction}

We can make a double category whose vertical morphisms are lenses and whose
horizontal morphisms are bundle morphisms. We call this construction the
\emph{Grothendieck double construction}, and refer to the resulting double
category as the double category of \emph{interfaces}.

\begin{defn}\label{defn:groth.double.construction}
Let $\type{Bun} : \Ca\op \to \Cat$ be an indexed category. Its \emph{Grothendieck double construction}, which we
will refer to as the double category $\type{Interface}$ of \emph{interfaces}, is the double category with:
\begin{itemize}
    \item Objects pairs $\lens{A}{C}$ with $C \in \Ca$ and $A \in \type{Bun}(C)$.
    \item Vertical morphisms $\lens{f^{\sharp}}{f} : \lens{A}{C}
      \leftrightarrows \lens{A'}{C'}$ are $\type{Bun}$-lenses, that is, morphisms in the Grothendieck construction $\int \type{Bun}\op$ of the pointwise opposite of $\type{Bun}$, namely pairs $f : C \to C'$ and $f^{\sharp} : f^{\ast}A' \to A$.
    \item Horizontal morphisms $\lens{g_{\sharp}}{g} : \lens{A}{C}
      \rightrightarrows \lens{A'}{C'}$ are $\type{Bun}$-maps, that is, morphisms in the Grothendick construction $\int \type{Bun}$ of $\type{Bun}$, namely pairs $g : C \to C'$ and $g_{\sharp} : A \to g^{\ast} A'$.
    \item There is a square
    \[
    \begin{tikzcd}
        \lens{A_1}{C_1} \arrow[r, shift left, "\lens{g_{1\sharp}}{g_1}"]\arrow[r, shift right] \arrow[d, leftarrow,  shift left] \arrow[d, shift right, "\lens{f_1^{\sharp}}{f_1}"'] & \lens{A_2}{C_2} \arrow[d, leftarrow, shift left, "\lens{f_2^{\sharp}}{f_2}"] \arrow[d, shift right] \\
        \lens{A_3}{C_3} \arrow[r, shift left]\arrow[r, shift right, "\lens{g_{2\sharp}}{g_2}"'] & \lens{A_4}{C_4}
    \end{tikzcd}
    \]
    if and only if the following diagrams commute:
    \[
\begin{tikzcd}
C_1 \arrow[r, "g_1"] \arrow[d, "f_1"'] & C_2 \arrow[d, "f_2"] &  & f_1^{\ast}A_3 \arrow[rr, "f_1^{\sharp}"] \arrow[d, "f_1^{\ast}g_{2\sharp}"'] &                                                              & A_1 \arrow[d, "g_{1\sharp}"] \\
C_3 \arrow[r, "g_2"']                  & C_4                  &  &
f_1^{\ast}g_2^{\ast}A_4 \arrow[r, equals]                                            & g_1^{\ast}f_2^{\ast}A_4 \arrow[r, "g_1^{\ast}f_2^{\sharp}"'] & g_1^{\ast}A_2               
\end{tikzcd}
    \]
We will call the squares in the Grothendieck double construction \emph{commuting squares}, since they represent the proposition that the ``lower'' and ``upper'' squares appearing in their boundary commute.
\end{itemize}

\end{defn}

Note that the horizontal category of the double category of interfaces is the
category of bundles and bundle maps --- the Grothendieck construction of $\type{Bun}$ --- while
the vertical category is the category of lenses --- the Grothendieck
construction of the pointwise opposite $\type{Bun}(-)\op$.

We can in fact express the double category of interfaces solely in terms of the
vertical/cartesian factorization system on the category $\int \type{Bun}$ of
bundles. This perspective is taken in the author's forthcoming
\cite{jaz2020cartesian}.
\begin{prop}[\cite{jaz2020cartesian}]\label{prop:groth.double.span}
Let $\type{Bun} : \Ca\op \to \Cat$ be an indexed category. The Grothendieck
double construction of $\type{Bun}$ is equivalent to the double category defined
by:
\begin{itemize}
  \item Its horizontal category is the Grothendieck construction $\int
    \type{Bun}$ of $\type{Bun}$. 
  \item A vertical morphism is a span
    \[
      \begin{tikzcd}
        & \lens{f^{\ast}A'}{C} \arrow[dr, equals, shift left, "\lens{\id}{f}"] \arrow[dr, shift right]
        \arrow[dl, equals, shift left] \arrow[dl, shift right, "\lens{f^{\sharp}}{\id}"']& \\
        \lens{A}{C} & & \lens{A'}{C'}
      \end{tikzcd}
    \]
    whose left leg is vertical and whose right leg is cartesian. These are
    composed by pullback in the usual way.
  \item A square is a map of spans in $\int \type{Bun}$ in the usual sense.
\end{itemize}
\end{prop}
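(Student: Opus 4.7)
The plan is to exhibit an explicit identity-on-objects equivalence of double categories between the Grothendieck double construction of $\type{Bun}$ and the span double category described in the statement. Both have the same horizontal category $\int \type{Bun}$ by definition, so the content is in matching vertical morphisms and 2-cells.

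First I would set up the bijection between $\type{Bun}$-lenses and spans of the prescribed form. Given a lens $\lens{f^{\sharp}}{f} : \lens{A}{C} \leftrightarrows \lens{A'}{C'}$, I send it to the span with apex $\lens{f^{\ast}A'}{C}$, left leg the vertical morphism $\lens{f^{\sharp}}{\id}$ in $\int \type{Bun}$, and right leg the cartesian morphism $\lens{\id}{f}$ lifting $f$. Conversely, given any span with vertical left leg and cartesian right leg, the cartesian leg is determined up to unique vertical iso by its target and its base map $f : C \to C'$, so the apex is forced to be (canonically isomorphic to) $\lens{f^{\ast}A'}{C}$; the vertical left leg then supplies a unique morphism $f^{\sharp} : f^{\ast}A' \to A$ in $\type{Bun}(C)$, recovering the lens. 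Passing to isomorphism classes of spans kills the ambiguity in the choice of cartesian lift.

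The key computation is that this bijection intertwines composition. Given composable lenses $\lens{f^{\sharp}}{f}$ and $\lens{g^{\sharp}}{g}$, I must form the pullback in $\int \type{Bun}$ of the cartesian right leg $\lens{\id}{f} : \lens{f^{\ast}A'}{C} \to \lens{A'}{C'}$ against the vertical left leg $\lens{g^{\sharp}}{\id} : \lens{g^{\ast}A''}{C'} \to \lens{A'}{C'}$. Because cartesian morphisms in a fibration are stable under pullback along verticals, this pullback exists with apex $\lens{f^{\ast}g^{\ast}A''}{C}$, new vertical leg given by $f^{\ast}g^{\sharp} : f^{\ast}g^{\ast}A'' \to f^{\ast}A'$, and new cartesian leg lifting $f$. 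The composite span then has apex $\lens{(gf)^{\ast}A''}{C}$, vertical leg $f^{\sharp} \circ f^{\ast}g^{\sharp} = (gf)^{\sharp}$, and cartesian leg lifting $gf$, matching the formula for lens composition exactly. I expect this to be the main obstacle: it relies on an identification of reindexing with pullback of cartesian morphisms, and requires care about the coherence isomorphism $(gf)^{\ast} \cong f^{\ast}g^{\ast}$ if $\type{Bun}$ is only a pseudofunctor.

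Finally I would verify that squares correspond. Unfolding a map of spans in $\int \type{Bun}$ compatible with prescribed horizontal morphisms $\lens{g_{1\sharp}}{g_1}, \lens{g_{2\sharp}}{g_2}$ yields (i) a base map $\lens{f_1^{\ast}A_3}{C_1} \to \lens{f_2^{\ast}A_4}{C_3}$, which by the cartesian right legs forces the base square $g_2 f_1 = f_2 g_1$ to commute, and (ii) a compatibility between this map and the vertical left legs, which after reindexing along $f_1$ yields exactly the hexagon in $\type{Bun}(C_1)$ appearing in Definition \ref{defn:groth.double.construction}. Conversely, any commuting square in the sense of Definition \ref{defn:groth.double.construction} determines such a map of spans. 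Once these bijections are established and checked to respect the (strict) double-categorical composition and identity structures, the functor defined on each kind of cell is tautologically a double equivalence, completing the proof.
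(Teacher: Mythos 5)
The paper itself gives no proof of this proposition --- it defers to the cited forthcoming work, remarking only that the result expresses the double category of interfaces via the vertical/cartesian factorization system on $\int \type{Bun}$ --- and your argument is exactly that standard argument, carried out correctly: lenses correspond to spans with vertical left leg and cartesian right leg, composition matches because the pullback of a vertical morphism along a cartesian one exists and is computed by reindexing (with the pseudofunctoriality coherence $(gf)^{\ast} \cong f^{\ast}g^{\ast}$ handled as you note), and a map of spans unwinds precisely to the base square and bundle square of Definition~\ref{defn:groth.double.construction}. The only blemishes are cosmetic: the codomain of the apex map should be $\lens{f_2^{\ast}A_4}{C_2}$ rather than over $C_3$, the apex component is forced to be $f_1^{\ast}g_{2\sharp}$ directly by compatibility with the cartesian legs (no further reindexing step is needed), and passing to isomorphism classes of spans is unnecessary --- it suffices to exhibit the evident (pseudo) double functor sending a lens to its canonical span and check it is an equivalence.
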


\section{The Indexed Double Category of Dynamical Systems}\label{sec:dynamical.systems}

In this section, we define the indexed double category of $(\type{Bun}, T)$-dynamical systems associated to the dynamical system doctrine $(\type{Bun}, T)$
given by an indexed category $\type{Bun} : \Ca\op \to
\Cat$ equipped with a section $T : \Ca \to \int \type{Bun}$.

\begin{defn}
A (covariantly) \emph{indexed double category} is a lax double functor $F : \Da
\to \Cat$ from a double category $\Da$ to the double category of categories,
functors (vertical), and profunctors (horizontal). 
\end{defn}

\begin{defn}\label{defn:indexed.double.cat.of.dynamical.systems}
The indexed double category of $(\type{Bun}, T)$-dynamical systems $\type{Dyn} :
\type{Interface} \to \Cat$ is the lax double functor acting as:
\begin{itemize}
  \item $\type{Dyn}\lens{I}{O}$ is the category of $\lens{I}{O}$-dynamical
    systems. Namely, this is the category whose objects are dynamical systems
    $$\lens{u}{r} : \lens{TS}{S} \leftrightarrows \lens{I}{O}$$
    with morphisms given by squares
    \[
    \begin{tikzcd}
        \lens{TS}{S} \arrow[r, shift left, "\lens{T\varphi}{\varphi}"]\arrow[r, shift right] \arrow[d, leftarrow,  shift left] \arrow[d, shift right, "\lens{u}{r}"'] & \lens{TS'}{S'} \arrow[d, leftarrow, shift left, "\lens{u'}{r'}"] \arrow[d, shift right] \\
        \lens{I}{O} \arrow[r, shift right, equals]\arrow[r, shift left, equals] & \lens{I}{O}
    \end{tikzcd}
    \]
    and composed via horizontal composition in the double category of interfaces.
  \item A lens (vertical morphism) $\lens{f^{\sharp}}{f} : \lens{I}{O}
    \leftrightarrows \lens{I'}{O'}$ gives a functor $\type{Dyn}\lens{I}{O} \to
    \type{Dyn}\lens{I'}{O'}$ by vertical composition:
    \[
    \begin{tikzcd}
        \lens{TS}{S} \arrow[r, shift left, "\lens{T\varphi}{\varphi}"]\arrow[r, shift right] \arrow[d, leftarrow,  shift left] \arrow[d, shift right, "\lens{u}{r}"'] & \lens{TS'}{S'} \arrow[d, leftarrow, shift left, "\lens{u'}{r'}"] \arrow[d, shift right] \\
        \lens{I}{O} \arrow[r, shift right, equals]\arrow[r, shift left, equals]  \arrow[d, leftarrow,  shift left] \arrow[d, shift right, "\lens{f^{\sharp}}{f}"']
        & \lens{I}{O} \arrow[d, leftarrow, shift left, "\lens{f^{\sharp}}{f}"] \arrow[d, shift right]\\
        \lens{I'}{O'} \arrow[r, shift right, equals]\arrow[r, shift left, equals] & \lens{I'}{O'}
    \end{tikzcd}
    \]
  \item A bundle map $\lens{g_{\sharp}}{g} : \lens{I}{O} \rightrightarrows
    \lens{I'}{O'}$ gives a profunctor $\type{Dyn}\lens{I}{O}\op \times
    \type{Dyn}\lens{I'}{O'} \to \Set$ sending dynamical systems $\lens{u}{r} :
    \lens{TS}{S} \leftrightarrows \lens{I}{O}$ and $\lens{u'}{r'} :
    \lens{TS'}{S'} \leftrightarrows \lens{I'}{O'}$ to the set of squares:
\[
\left\{ 
    \begin{tikzcd}
        \lens{TS}{S} \arrow[r, dashed, shift left,
        "\lens{T\varphi}{\varphi}"]\arrow[r, dashed, shift right] \arrow[d, leftarrow,  shift left] \arrow[d, shift right, "\lens{u}{r}"'] & \lens{TS'}{S'} \arrow[d, leftarrow, shift left, "\lens{u'}{r'}"] \arrow[d, shift right] \\
        \lens{I}{O}\arrow[r, shift right, "\lens{g_{\sharp}}{g}"']\arrow[r, shift left] & \lens{I'}{O'}
    \end{tikzcd}
 \right\}
\]
and acting on the left and right via horizontal composition. The laxator and
unitor are given by horizontal composition and identity respectively.
\item A square gets sent to the morphism of profunctors given by composing with
  that square
\end{itemize}
\end{defn}

We may take a \emph{double Grothendieck construction} (not to be confused with
the earlier Grothendieck double construction) to get the double category of open
dynamical systems with variable interface. 

\begin{defn}\label{defn:double.groth.construction}
Let $\Ea : \Da \to \Cat$ be an indexed double
category. The \emph{double Grothendieck construction} $\iint \Ea$ is the
double category with:
\begin{itemize}
\item Objects pairs $(D, A)$ with $D \in \Da$ and $A \in \Ea(D)$.
\item Vertical morphisms pairs $(f, f_{\sharp}) : (D, A) \to (D', A')$ with $f :
  D \to D'$ vertical in $\Da$ and $f_{\sharp} : f_{\ast}(A) \to A'$ in
  $\Ea(D')$.
\item Horizontal morphisms $(g, g^{\sharp}) : (D, A) \topro (D', A')$ are pairs
  $g : D \topro D'$ and $g^{\sharp} \in \Ea(g)(A, A')$.
\item Squares
  \[
    \begin{tikzcd}
      (D, A) \arrow[r, "{ (g, g^{\sharp}) }"] \arrow[d, "{ (f, f_{\sharp}) }"'] \arrow[dr, phantom, "\alpha"] & (D', A') \arrow[d, "{ (f', f'_{\sharp}) }"]\\
      (D'', A'') \arrow[r, "{ (g', g'^{\sharp}) }"'] & (D''', A''')
    \end{tikzcd}
  \]
  is a square
  \[
    \begin{tikzcd}
      D \arrow[r, "g"] \arrow[d, "f"'] \arrow[dr, phantom, "\alpha"] & D'\arrow[d, "f'"]\\
      D'' \arrow[r, "g'"'] & D'''
    \end{tikzcd}
  \]
  such that $\Ea(\alpha)(g^{\sharp}) \cdot f'_{\sharp} = f_a \cdot g^{'\sharp}$.
\end{itemize}
Composition is given as follows:
\begin{itemize}
\item Vertical composition is given by
  $$(f', f'_{\sharp}) \circ (f, f_{\sharp}) \coloneqq  (f' \circ f, f'_{\sharp} \circ
  \Ea(f)(f_{\sharp})).$$
  The vertical identities are $(\id, \id)$.
\item Horizontal composition is given by
  $$(g', g^{'\sharp}) \circ (g, g^{\sharp}) \coloneqq (g' \circ g, \mu(g'^{\sharp},
  g^{\sharp}))$$
  where $\mu : \Ea(g') \otimes \Ea(g) \to \Ea(g' \circ g)$ is the laxator.

  The horizontal identities are $(\id, \eta(\id))$, where $\eta$ is the unitor.
\item Vertical and horizontal composition of squares are given as in $\Da$.
\end{itemize}
\end{defn}

\section{Indexed Double Functors Covariantly Represented by Dynamical Systems}\label{sec:representable.functors}

In this section, we consider functors out of the indexed double category of
dynamical systems. In particular, we will focus on covariantly representable
functors. As we saw in Section \ref{sec:covariant.morphisms}, notions such as
trajectories, steady states, and periodic orbits of open dynamical systems are
represented by covariant morphisms out of simple systems. In this section, we will construct covariantly representable indexed double
functors into an indexed double category of spans (Definition
\ref{defn:indexed.double.span}).

In \emph{The steady states of coupled dynamical systems compose via matrix
  arithmetic} \cite{spivak2015steady}, David Spivak shows that taking steady states of
deterministic automata (and other systems) gives a functor of wiring diagram
algebras into an wiring diagram algebra of matrices. In Theorem 4.40 of that
paper, Spivak shows that this functor factors through a wiring
diagram algebra of ``matrices of sets''\footnote{See Definition 4.34 of \emph{ibid.}}.

Our indexed double category of spans generalizes this algebra of matrices of sets
and therefore also the algebra of matrices. We can see a span $V
\xleftarrow{s} X \xto{t} W$ as a $V \times W$ matrix of sets $X_{vw}$ (the
fibers over $v \in V$ and $w \in W$), and composition of
spans as matrix multiplication. As a result, we can see Theorem
\ref{thm:representable.indexed.double.functor} as a generalization of Spivak's
\cite[Theorem 4.40]{spivak2015steady}, showing that any covariantly
representable structure associated to an open dynamical system --- including not only
steady states but also trajectories and periodic orbits --- composes via matrix arithmetic.

We begin by defining the codomain of these covariantly representable functors.
\begin{defn}\label{defn:indexed.double.span}
Let $\Aa$ be a category with finite limits, and consider the double categoy
$\type{Span}(\Aa)$ as with vertical morphisms the spans in $\Aa$ and horizontal
morphisms the maps in $\Aa$. Note that it is the vertical morphisms of
$\type{Span}(\Aa)$ which are the spans, contradicting a common (but not
universal) convention. We define the \emph{slice indexed double category}
$\Aa_{/(-)} : \type{Span}(\Aa) \to \Cat$ to be given by the following
assignments:
\begin{itemize}
  \item To every object $X$, $\Aa_{/X}$ is the slice category of $\Aa$ over $X$.
    Note that this is equivalently the category of vertical morphisms
    $v\type{Span}(\Aa)(\ast, \Aa)$.
  \item To every span $X \xleftarrow{f} Z \xrightarrow{g} Y$, we assign the
    functor $g_{!}f^{\ast} : \Aa_{/X} \to \Aa_{/Y}$. Note that this is vertical
    composition by the span.
  \item To every map $f : X \to Y$, we associate the profunctor $(\Aa_{/X})\op
    \times \Aa_{/Y} \to Set$ assigning $x : Z_1 \to X$ and $y : Z_2 \to Y$ to
    the set of commuting squares
    \[
      \left\{
\begin{tikzcd}
Z_1 \arrow[d, "x"'] \arrow[r, dashed] & Z_2 \arrow[d, "y"] \\
X \arrow[r, "f"']                     & Y                 
\end{tikzcd}
      \right\}
    \]
    acted upon by composition on the left and right. 
  \item To every map of spans
    \[
\begin{tikzcd}
X_1 \arrow[r]                                    & X_2                                    \\
Z_1 \arrow[r] \arrow[u, "f_1"] \arrow[d, "g_2"'] & Z_2 \arrow[u, "f_2"'] \arrow[d, "g_2"] \\
Y_1 \arrow[r]                                    & Y_2                                   
\end{tikzcd}
    \]
    associate the morphism of profunctors which acts by pulling back along the
    first square and pushing forward along the second.
\end{itemize}
\end{defn}

\begin{thm}\label{thm:representable.indexed.double.functor}
Let $\lens{u}{\id} : \lens{TS}{S} \leftrightarrows \lens{I}{S}$ be a $(\type{Bun},
T)$-dynamical system which exposes its entire state. Then we have an indexed double functor
\[
\begin{tikzcd}
\type{Interface} \arrow[dd, "{\int \type{Bun}\left(\lens{I}{S}, -\right)}"'] \arrow[rrd, "\type{Dyn}", bend left] & {} \arrow[dd, "{h\type{Dyn}\left(\lens{u}{\id}, -\right)}" description, Rightarrow] &      \\
                                                                                                                &                                                                                   & \Cat \\
\type{Span}(\Set) \arrow[rru, bend right, "\Set_{/(-)}"']                                                                       & {}                                                                                &     
\end{tikzcd}
\]
covariantly represented by $\lens{u}{\id}$.
\end{thm}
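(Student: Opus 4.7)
The plan is to realize this indexed double functor as a ``covariant Yoneda embedding'' along the probe $\lens{u}{\id}$; the hypothesis that $\lens{u}{\id}$ exposes its entire state ensures that the underlying bundle map of any horizontal morphism out of $\lens{u}{\id}$ lands canonically in $\int\type{Bun}(\lens{I}{S},-)$. For each interface $\lens{J}{O}$, I define the component functor
\[
F_{\lens{J}{O}} \colon \type{Dyn}\lens{J}{O} \to \Set_{/\int\type{Bun}(\lens{I}{S},\lens{J}{O})}
\]
on objects by $F_{\lens{J}{O}}(\lens{v}{q}) \coloneqq h\type{Dyn}(\lens{u}{\id},\lens{v}{q})$, the set of horizontal morphisms in the double Grothendieck construction $\iint\type{Dyn}$ from $\lens{u}{\id}$ to $\lens{v}{q}$, equipped with the forgetful map to $\int\type{Bun}(\lens{I}{S},\lens{J}{O})$ that reads off the bundle map occupying the bottom of the defining square. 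A morphism $\lens{v}{q} \to \lens{v'}{q'}$ in $\type{Dyn}\lens{J}{O}$ is a square in $\type{Interface}$ with identity bottom, and pasting it horizontally onto any horizontal morphism out of $\lens{u}{\id}$ produces a new horizontal morphism with the same underlying bundle map, yielding functoriality compatible with the slice projection.

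For a bundle map $\lens{g_\sharp}{g} \colon \lens{J}{O} \rightrightarrows \lens{J'}{O'}$, the profunctor $\type{Dyn}(\lens{g_\sharp}{g})$ records squares in $\type{Interface}$ whose bottom is $\lens{g_\sharp}{g}$; pasting such a square horizontally onto a horizontal morphism out of $\lens{u}{\id}$ and extracting the composite bundle map gives the required morphism of profunctors into the squares profunctor on $\Set_{/-}$ pulled back along $\int\type{Bun}(\lens{I}{S},-)$. For a lens $\lens{f^\sharp}{f} \colon \lens{J}{O} \leftrightarrows \lens{J'}{O'}$, I invoke Proposition \ref{prop:groth.double.span} to present it as a vertical-then-cartesian span $\lens{J}{O} \leftarrow \lens{f^\ast J'}{O} \to \lens{J'}{O'}$ in $\int\type{Bun}$; applying $\int\type{Bun}(\lens{I}{S},-)$ yields a span of sets whose pullback-pushforward action on $\Set_{/-}$ must match, through the components, the vertical composition of dynamical systems by $\lens{f^\sharp}{f}$. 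This agreement follows from the universal property of the cartesian lift: a horizontal morphism from $\lens{u}{\id}$ into $\lens{f^\sharp}{f} \circ \lens{v}{q}$ is exactly a horizontal morphism from $\lens{u}{\id}$ into $\lens{v}{q}$ whose underlying bundle map factors through $\lens{f^\ast J'}{O}$.

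The main obstacle is verifying the lax coherence: the laxators of the composite $\Set_{/-} \circ \int\type{Bun}(\lens{I}{S},-)$ must be intertwined with those of $\type{Dyn}$ by the components. Unwound, this asserts that horizontal composition in $\iint\type{Dyn}$ of two covariant morphisms out of $\lens{u}{\id}$ agrees with the $\type{Dyn}$ laxator at the level of squares, and that its underlying bundle map composes in $\int\type{Bun}$, which under $\int\type{Bun}(\lens{I}{S},-)$ is precisely span composition (pullback) in $\Set$. This is a routine but careful diagram chase inside the Grothendieck double construction, leaning on the associativity and unitality of horizontal composition in $\type{Interface}$; compatibility on squares of the base then follows by pasting.
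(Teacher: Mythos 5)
Your construction is essentially the paper's own: the base double functor is obtained from Proposition \ref{prop:groth.double.span} together with the fact that the representable $\int\type{Bun}\left(\lens{I}{S},-\right)$ preserves pullbacks, and the transformation sends each system to its set of covariant morphisms out of $\lens{u}{\id}$ with the projection onto the underlying bundle map, bundle maps acting by post-composition — your extra sketch of the lens compatibility and lax coherence only fills in what the paper leaves implicit. One small correction: the full-state hypothesis is not what makes the projection land in $\int\type{Bun}\left(\lens{I}{S},-\right)$ (that is automatic from the interface being $\lens{I}{S}$); it is used precisely in your cartesian-lift step, where the identity readout is what lets one recover, from a covariant morphism into $\lens{f^{\sharp}}{f}\circ\lens{v}{q}$, the base factorization $q\varphi$ needed for the pullback description (and note that the correspondence is with a morphism \emph{together with} such a factorization, i.e.\ the pullback, not merely with those morphisms whose bottom happens to factor).
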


The double functor $\int\type{Bun}\left( \lens{I}{S}, - \right) : \type{Interface}
\to \type{Span}(\Set)$ is given by interpreting $\type{Interface}$ as a double
category of certain spans in $\int \type{Bun}$ (by Proposition
\ref{prop:groth.double.span}) and noting that this representable functor
preserves pullbacks.

The transformation $h\type{Dyn}\left( \lens{u}{\id}, - \right) : \type{Dyn}
\Rightarrow \Set_{/(-)}$ consists of:
\begin{itemize}
\item For every interface $\lens{I'}{O'}$, we have a functor
  $\type{Dyn}\lens{I'}{O'} \to \Set_{ {\bigg/} \int\type{Bun}\left( \lens{I}{S},
      \lens{I'}{O'} \right)}$ sending a $\lens{I'}{O'}$-dynamical system
  $\lens{u'}{r'}$ to the set of covariant morphisms $h\type{Dyn}\left(
    \lens{u}{\id}, \lens{u'}{r'} \right)$ together with the projection of its
  first component to
  $\int\type{Bun}\left( \lens{I}{S}, \lens{I'}{O'} \right)$. This functor acts on
  morphisms by composition.
\item For every bundle map $\lens{g_{\sharp}}{g} : \lens{I'}{O'}
  \rightrightarrows \lens{I''}{O''}$, we have a map of profunctors given by
  post-composition:
  \[
\left\{ 
    \begin{tikzcd}
        \lens{TS'}{S'} \arrow[r, dashed, shift left,
        "\lens{T\varphi}{\varphi}"]\arrow[r, dashed, shift right] \arrow[d, leftarrow,  shift left] \arrow[d, shift right, "\lens{u'}{r'}"'] & \lens{TS''}{S''} \arrow[d, leftarrow, shift left, "\lens{u''}{r''}"] \arrow[d, shift right] \\
        \lens{I'}{O'}\arrow[r, shift right, "\lens{g_{\sharp}}{g}"']\arrow[r, shift left] & \lens{I''}{O''}
    \end{tikzcd}
 \right\} \to
\left\{ 
    \begin{tikzcd}
        h\type{Dyn}\left( \lens{u}{r}, \lens{u'}{r'} \right) \arrow[r, dashed, "\lens{T\varphi}{\varphi} \circ -"] \arrow[d] & h\type{Dyn}\left( \lens{u}{r}, \lens{u''}{r''} \right)  \arrow[d] \\
        \int\type{Bun}\left( \lens{I}{S}, \lens{I'}{O'} \right)\arrow[r] & \int\type{Bun}\left( \lens{I}{S}, \lens{I''}{O''} \right)
    \end{tikzcd}
 \right\}
  \]
\end{itemize}

In his behavioral approach to control theory \cites{Willems2007a}{Willems2007}, Willems
looks not at the dynamical law governing a system, but rather the trajectories
of the system and the variables that they expose. In their \emph{Temporal Type
  Theory} \cite{ShultzSpivak2019}, Schultz and Spivak give a definition of
\emph{behavior type} --- a sheaf on the interval domain of real numbers ---
which forms a foundation for a Willems-style analysis of various dynamical
systems. A Willems-style dynamical system is then a behavior type $S$ of
trajectories together with exposed variables $x : S \to V$ (landing in some
behavior type of values $V$). If $V \xleftarrow{s} X \xrightarrow{t} W$ is a
span representing a way that the variables of sort $V$ will be shared to form
variables of sort $W$, then the system resulting from the sharing of these variables is $t_{!}s^{\ast}x : S
\times_{V} X \to W$. We can express this idea through the indexed double
category $\Ba_{/(-)} : \type{Span}(\Ba) \to \Cat$ which assigns to each behavior
type $V$ of variables the category of systems exposing variables of sort $V$,
and which assigns to any span the variable sharing functor which it describes.

We will show that taking solutions of continuous-time dynamical systems
constitutes an indexed double functor $\type{Dyn} \to \Ba_{/(-)}$ taking a
dynamical system to its behavior type of trajectories. Indexed double
functoriality here shows that plugging in exposed variables to parameters can be
seen as an instance of variable sharing. In particular, it shows that if one
takes a system of systems of differential equations with parameters, solves them
in terms of those parameters, and then makes substitutions of those parameters
in terms of exposed variables of the systems, this is the same as substituting
and then solving. The proof relies crucially on the
representablility (by $\lens{\frac{d}{ds}}{\id} : \lens{T\Rb}{\Rb}
\leftrightarrows \lens{\Rb}{\Rb}$) of solutions of continuous-time dynamical
systems. We can see the following corollary as a paradigm for turning the open
dynamical system framework of free parameters (inputs) and exposed variables
(outputs) coupled by a dynamical law (the system itself) into a Willems-style
behavioral dynamical system consisting of trajectories and exposed variables.
\begin{cor}\label{thm:indexed.double.functor.behavior.type}
  Let $\Ba$ denote the category of Schultz-Spivak behavior types. There is an indexed double functor
\[
\begin{tikzcd}
\type{Interface} \arrow[dd, "\type{Traj}"'] \arrow[rrd, "\type{Dyn}", bend left] & {} \arrow[dd, "\type{Sol}" description, Rightarrow] &      \\
                                                                                                                &                                                                                   & \Cat \\
\type{Span}(\Ba) \arrow[rru, bend right, "\Ba_{/(-)}"']                                                                       & {}                                                                                &     
\end{tikzcd}
\]
taking the behavior types of solutions of $(\type{Subm}, T)$-dynamical systems
--- that is, of continuous-time dynamical systems. 
\end{cor}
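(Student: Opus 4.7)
The plan is to obtain this corollary by specializing Theorem \ref{thm:representable.indexed.double.functor} to the continuous doctrine with the ``walking trajectory'' system as the representing object, and then enriching the resulting $\Set$-valued indexed double functor to a $\Ba$-valued one by equipping every hom-set in sight with its canonical behavior-type structure.

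First I would specialize: take $(\type{Bun}, T) = (\type{Subm}, T)$ and apply Theorem \ref{thm:representable.indexed.double.functor} with representing object $\lens{\frac{d}{ds}}{\id} : \lens{T\Rb}{\Rb} \leftrightarrows \lens{\Rb}{\Rb}$, which indeed exposes its entire state. This produces an indexed double functor $\int \type{Subm}\!\left(\lens{\Rb}{\Rb}, -\right) : \type{Interface} \to \type{Span}(\Set)$ together with a transformation $h\type{Dyn}\!\left(\lens{\frac{d}{ds}}{\id}, -\right) : \type{Dyn} \Rightarrow \Set_{/(-)}$ which, unwound at a continuous-time system $\lens{u'}{r'} : \lens{TS'}{S'} \leftrightarrows \lens{I'}{O'}$, is exactly the set of trajectories in $S'$ satisfying $\frac{ds'}{dt} = u'(s', i(t))$ projecting to the set of interface-level trajectories in $\lens{I'}{O'}$. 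This already gives the skeleton of the diagram in the statement at the $\Set$ level.

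Next I would promote this data to $\Ba$. The key input is that $\Rb$ carries a canonical Schultz--Spivak behavior-type structure: for a subobject $J$ of the interval domain one assigns the set of smooth maps $J \to \Rb$, with restriction along inclusions giving the sheaf. Applied sectionwise to the submersions in the continuous doctrine, this makes each $\int \type{Subm}\!\left(\lens{\Rb}{\Rb}, \lens{I'}{O'}\right)$ into a behavior type $\type{Traj}\lens{I'}{O'}$: a $J$-section is a map from the trajectory-system on $J$ into the restricted interface. Likewise each set of covariant morphisms $h\type{Dyn}\!\left(\lens{\frac{d}{ds}}{\id}, \lens{u'}{r'}\right)$ becomes a behavior type of solutions, because being a solution is a pointwise-checkable local condition and trajectories glue over any cover of the interval domain. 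This is where representability is essential: a solution on a union is the same thing as compatible solutions on the pieces precisely because the representing system $\lens{\frac{d}{ds}}{\id}$ itself arises as a colimit of its restrictions to subintervals.

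Finally I would verify the indexed double functoriality lifts from $\Set$ to $\Ba$. Vertical functoriality requires that for a lens $\lens{f^\sharp}{f}$ the pushforward of solutions is a map of behavior types, which follows because post-composition with a lens commutes with restriction to subintervals. Horizontal functoriality requires that the profunctor assigned to a bundle map $\lens{g_\sharp}{g}$ is sheaf-valued, and that the laxator of $\type{Dyn}$ (horizontal composition of squares) agrees with the variable-sharing composition $t_!s^\ast$ in $\Ba_{/(-)}$; both reduce to the fact that pullback of submersions along smooth maps and pullback of trajectories along such maps commute, which in turn is the sheaf-theoretic assertion that ``substitute then solve equals solve then substitute.'' The main obstacle is precisely this last compatibility: checking that the laxator coming from Theorem \ref{thm:representable.indexed.double.functor}, which is built from composition in the Grothendieck double construction, coincides pointwise over each interval with the variable-sharing laxator of $\Ba_{/(-)}$. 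Once this is established, the corollary follows by transporting the transformation $h\type{Dyn}(\lens{\frac{d}{ds}}{\id}, -)$ of Theorem \ref{thm:representable.indexed.double.functor} along the forgetful functor $\Ba \to \Set$ and observing that all the required structure has already been lifted.
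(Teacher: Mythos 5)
Your core ingredients match the paper's: solutions over an interval are covariantly represented by the walking-trajectory system on that interval, Theorem \ref{thm:representable.indexed.double.functor} supplies all the indexed-double-functor structure at the level of sets, and the remaining work is to see that the resulting data is sheaf-valued because solutions are a local condition and glue. Where you diverge is in the scaffolding. The paper does not start from the single representing object $\lens{\frac{d}{ds}}{\id} : \lens{T\Rb}{\Rb} \leftrightarrows \lens{\Rb}{\Rb}$ and then ``enrich'' its hom-sets: instead it takes, for each length $\ell$, the system $\lens{\frac{d}{ds}}{\id}$ on $(0,\ell)$, and packages the whole family as a functor $\Ib\Rb_{/\vartriangleright} \to h\type{Dyn}$ from the Schultz--Spivak site into the category of continuous-time systems and covariant morphisms. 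With that in hand, the presheaf structure and every naturality condition you propose to check by hand (compatibility of restriction with vertical post-composition by lenses, with the profunctor actions, and with the laxator) come for free from the fact that the constructions of Theorem \ref{thm:representable.indexed.double.functor} are functorial in the representing object and the restriction maps are themselves morphisms of $h\type{Dyn}$; the only thing left to verify is the sheaf condition for the presheaf of solutions, which the paper checks directly rather than via your claim that the walking trajectory is a colimit of its restrictions (that claim is essentially the sheaf condition restated, so it does not save you any work).

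Two smaller points to tighten. First, the $\Set$-valued functor you build in your opening step, with representing object on all of $\Rb$, yields only bi-infinite trajectories; it is not the underlying-set functor of the behavior-type-valued functor, so the closing move of ``transporting along the forgetful functor $\Ba \to \Set$'' does not quite make sense as stated --- the object you actually need is the site-indexed family of representables, which you do describe correctly in the middle (``a $J$-section is a map from the trajectory-system on $J$''), so the $\Rb$-specialization can simply be dropped. Second, the ``main obstacle'' you identify --- matching the laxator of Theorem \ref{thm:representable.indexed.double.functor} with the variable-sharing composition in $\Ba_{/(-)}$ --- is already settled at the set level by that theorem; at the $\Ba$ level it reduces again to naturality in the representing object, which the site-functor formulation makes automatic.
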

\begin{proof}[Proof Sketch]
We note that a solution of length $\ell$ is represented by the dynamical system
$$\lens{\frac{d}{ds}}{\id} : \lens{T(0, \ell)}{(0, \ell)} \leftrightarrows
\lens{(0, \ell)}{(0, \ell)}.$$
There is a functor $\Ib\Rb_{/\vartriangleright} \to h\type{Dyn}$ from
the site $\Ib\!\Rb_{/\vartriangleright}$ of $\Ba$ (this is the twisted arrow
category of the one object category associated to the additive monoid of
non-negative reals; see Definition 3.1 of \cite{ShultzSpivak2019}) to the
category of continuous-time dynamical systems and covariant morphisms sending
$\ell$ to the dynamical system $\lens{\frac{d}{ds}}{\id}$. Therefore, the various representable functors defined in Theorem
\ref{thm:representable.indexed.double.functor} take values in presheaves on $\Ib\!\Rb_{/\vartriangleright}$. We may then check that the values actually land in
sheaves by noting this property for the presheaf of solutions.
\end{proof}

\section{Conclusion}

In this extended abstract, we have laid out an abstract framework in which to
study open dynamical systems of many different kinds. We have analyzed morphisms
between systems into two sorts --- those which act on parameters
covariantly, and those which act on parameters contravariantly --- and presented
a double category of dynamical systems formed by these two sorts of morphisms.

We saw trajectories and steady states as an example of covariant morphisms,
and ``plugging in variables to parameters'' as an example of contravariant
morphisms. These are not the only uses of these morphisms. For example, in
the monadic doctrine of Markov decision processes, \emph{hierarchical
  planning} \cite{spivakmarkov} can be seen as an example of a contravariant
morphism, a perspective which the author looks to take in future work. 

In Theorem \ref{thm:representable.indexed.double.functor}, we showed that if one
takes a system of open dynamical systems, finds their trajectories, steady
states, or periodic orbits, and then makes substitutions of their parameters
in terms of the exposed variables of the systems, this is the same as substituting
and then finding those trajectories, steady states, or periodic orbits. These substitutions can be seen as a
form of ``matrix arithmetic'' for combining these covariantly representable
structures associated to open dynamical systems. 

This theorem suggests a possible way to speed up numerical approximation of
large systems with many repeated subparts. Namely, one approximates the solution
for the various sorts of subparts in terms of their parameters, and then
substitutes in those solutions according to the scheme by which the large system
was formed out of its subparts. Exploring the viability of this approach will be
the subject of future work.

\printbibliography

\end{document}